%% This is file `elsarticle-template-1-num.tex',
%%
%% Copyright 2009 Elsevier Ltd
%%
%% This file is part of the 'Elsarticle Bundle'.
%% ---------------------------------------------
%%
%% It may be distributed under the conditions of the LaTeX Project Public
%% License, either version 1.2 of this license or (at your option) any
%% later version.  The latest version of this license is in
%%    http://www.latex-project.org/lppl.txt
%% and version 1.2 or later is part of all distributions of LaTeX
%% version 1999/12/01 or later.
%%
%% The list of all files belonging to the 'Elsarticle Bundle' is
%% given in the file `manifest.txt'.
%%
%% Template article for Elsevier's document class `elsarticle'
%% with numbered style bibliographic references
%%
%% $Id: elsarticle-template-1-num.tex 149 2009-10-08 05:01:15Z rishi $
%% $URL: http://lenova.river-valley.com/svn/elsbst/trunk/elsarticle-template-1-num.tex $
%%
%%\documentclass[preprint,12pt]{elsarticle}

%% Use the option review to obtain double line spacing
%% \documentclass[preprint,review,12pt]{elsarticle}

%% Use the options 1p,twocolumn; 3p; 3p,twocolumn; 5p; or 5p,twocolumn
%% for a journal layout:
%% \documentclass[final,1p,times]{elsarticle}
%% \documentclass[final,1p,times,twocolumn]{elsarticle}
%% \documentclass[final,3p,times]{elsarticle}
%% \documentclass[final,3p,times,twocolumn]{elsarticle}
%% \documentclass[final,5p,times]{elsarticle}
%\documentclass[final,5p,times,twocolumn]{elsarticle}
\documentclass[final,5p,times,twocolumn]{elsarticle}

%% if you use PostScript figures in your article
%% use the graphics package for simple commands
%% \usepackage{graphics}
%% or use the graphicx package for more complicated commands
%% \usepackage{graphicx}
%% or use the epsfig package if you prefer to use the old commands
%% \usepackage{epsfig}

%% The amssymb package provides various useful mathematical symbols
\usepackage{amssymb}
%% The amsthm package provides extended theorem environments
%% \usepackage{amsthm}

%% The lineno packages adds line numbers. Start line numbering with
%% \begin{linenumbers}, end it with \end{linenumbers}. Or switch it on
%% for the whole article with \linenumbers after \end{frontmatter}.
\usepackage{lineno}

%% natbib.sty is loaded by default. However, natbib options can be
%% provided with \biboptions{...} command. Following options are
%% valid:

%%   round  -  round parentheses are used (default)
%%   square -  square brackets are used   [option]
%%   curly  -  curly braces are used      {option}
%%   angle  -  angle brackets are used    <option>
%%   semicolon  -  multiple citations separated by semi-colon
%%   colon  - same as semicolon, an earlier confusion
%%   comma  -  separated by comma
%%   numbers-  selects numerical citations
%%   super  -  numerical citations as superscripts
%%   sort   -  sorts multiple citations according to order in ref. list
%%   sort&compress   -  like sort, but also compresses numerical citations
%%   compress - compresses without sorting
%%
%% \biboptions{comma,round}

% \biboptions{}

\usepackage{amsmath}
\usepackage{amsfonts}
\usepackage{mathrsfs}
\usepackage{hyperref,comment} 
\usepackage{pifont}
\usepackage{pgf}
\usepackage{pgfplots}
\usepackage{soul}
\usepackage{color}
\usepackage{calrsfs}
\DeclareMathAlphabet{\pazocal}{OMS}{zplm}{m}{n}

\newcommand{\argmin}{{\rm argmin}}

\newcommand{\dist}{\mbox{\rm dist}\,}

\newcommand{\dom}{\mbox{dom}\,}
\newcommand{\RR}{\mathbb{R}}

\newcommand{\NN}{\mathbb{N}}

\newcommand{\ed}{}%\color{blue}}

\newtheorem{theorem}{Theorem}[section]
\newtheorem{lemma}[theorem]{Lemma}

\newtheorem{definition}{Definition}
\newtheorem{assumption}{Assumption}
\newtheorem{remark}{Remark}

\newenvironment{proof}[1][]{\noindent {\bf Proof. #1\;}}{\hfill $\Box$\\}
\newenvironment{proofName}[1][]{\noindent {\bf Proof #1.}}{\hfill $\Box$\\}

%\includecomment{comment} % inclus les commentaires l'enlever pour les faire disparaitre

\colorlet{FRAME}{yellow!5!white}

%\journal{Operation Research Letters}
\journal{\hskip-.14in {\Huge\color{white}$\blacksquare$}}

\begin{document}

\begin{frontmatter}

%% Title, authors and addresses

%% use the tnoteref command within \title for footnotes;
%% use the tnotetext command for the associated footnote;
%% use the fnref command within \author or \address for footnotes;
%% use the fntext command for the associated footnote;
%% use the corref command within \author for corresponding author footnotes;
%% use the cortext command for the associated footnote;
%% use the ead command for the email address,
%% and the form \ead[url] for the home page:
%%
%% \title{Title\tnoteref{label1}}
%% \tnotetext[label1]{}
%% \author{Name\corref{cor1}\fnref{label2}}
%% \ead{email address}
%% \ead[url]{home page}
%% \fntext[label2]{}
%% \cortext[cor1]{}
%% \address{Address\fnref{label3}}
%% \fntext[label3]{}

\title{The value function approach to convergence analysis in composite optimization}

%% use optional labels to link authors explicitly to addresses:
%% \author[label1,label2]{<author name>}
%% \address[label1]{<address>}
%% \address[label2]{<address>}

\author{Edouard Pauwels}
\address{IRIT-UPS, 118 route de Narbonne, 31062 Toulouse, France.} 
\ead{edouard.pauwels@irit.fr} 

\begin{abstract}
%% Text of abstract
This works aims at understanding further convergence properties of first order local search methods with complex geometries.  We focus on the composite optimization model which unifies within a simple formalism many problems of this type.  We provide a general convergence analysis of the composite Gauss-Newton method  as introduced in  \cite{burke1995gaussnewton} (studied further in \cite{chong2002convergence,chong2007majorizing,lewis2015proximal}) under tameness assumptions (an extension of semi-algebraicity). Tameness is a very general condition satisfied by virtually all problems solved in practice. The analysis is based on recent progresses in understanding convergence properties of sequential convex programming methods through the value function as introduced in \cite{bolte2016Majorization}.
\end{abstract}

\begin{keyword}
Composite optimization \sep  Gauss-Newton method \sep KL inequality \sep value function \sep convergence.
%% keywords here, in the form: keyword \sep keyword

%% MSC codes here, in the form: \MSC code \sep code
%% or \MSC[2008] code \sep code (2000 is the default)

\end{keyword}

\end{frontmatter}

%%
%% Start line numbering here if you want
%%
%\linenumbers

\begin{figure*}[t!]
\centerline{\fcolorbox{black}{FRAME}{ 
\begin{minipage}{12cm}
	\begin{center}{\bf Composite Gauss-Newton}
\end{center}
Choose $x_0\in \ed{D}$, $\mu_0>0$, $\tau>1$  and iterate\\
\begin{equation}
	\label{eq:mainAlgo}
	\begin{array}{ll}
		\text{\bf Step 1.}& \text{Set $\mu_k = \mu_0$ and compute the candidate iterate:}\\
		& 
		\begin{array}{rl}
			\tilde{x}_{k+1} &\leftarrow\;\; \argmin_{y\in D}\;g(F(x_k) + \nabla F(x_k) (y  - x_k)) + \frac{\mu_k}{2} \|y - x_k\|^2\\ 
		\end{array}\\
		\text{\bf Step 2.}& \text{While } g\left(F(\tilde{x}_{k+1})\right) > g(F(x_k) + \nabla F(x_k) (\tilde{x}_{k+1}  - x_k)) + \frac{\mu_k}{2} \|\tilde{x}_{k+1} - x_k\|^2\\
		&\begin{array}{rl}
			\mu_k &\leftarrow\;\;\tau \mu_k\\
			\tilde{x}_{k+1} &\leftarrow\;\;\argmin_{y\in D}\;g(F(x_k) + \nabla F(x_k) (y  - x_k)) + \frac{\mu_k}{2} \|y - x_k\|^2\\ 
		\end{array}\\
		\text{\bf Step 3.}& \text{Update}\\
		&\begin{array}{rl}
			x_{k+1} &\leftarrow\;\; \tilde{x}_{k+1} 
		\end{array}\\
	\end{array}
\end{equation}
\end{minipage}
}
}
\end{figure*}
%% main text
\section{Introduction}
In composite optimization, convergence of Gauss-Newton methods is a question that has attracted a lot of research efforts in the past decades. Let us mention a few milestones: criticality of accumulation points was proved in \cite{burke1985descent}, convergence under sharpness assumption around accumulation points is given in \cite{burke1995gaussnewton}, and extensions to weaker regularity conditions are described in \cite{chong2002convergence,chong2007majorizing}. Assymptotic behaviour under prox-regularity and identification under partial smoothness is investigated in \cite{lewis2015proximal}. These results attest to the difficulty of this undertaking. Although the composite model is strongly structured and Gauss-Newton method is explicitly designed to take advantage of it, convergence of iterates always rely on strong local growth conditions around accumulation points. These are often difficult to check in advance for general problems due to the complexity of the optimization model. To our knowledge, a simple and flexible global convergence analysis is still lacking for these methods.

%Composite optimization consists in using structure of mathematical optimization problems in the design of numerical algorithms. This approach has a long history of successes in practical applications (see \textit{e.g.} \cite{combpesq11} and many more). {\color{magenta} This paper focuses on } \st{The focus of interest in this paper is the generalization of} the  forward-backward algorithm ({\jb mauvais nom....})which was recently studied in \cite{lewis2015proximal} for the composite model (see Section \ref{sec:pbSetting} for details). We refer the reader to \cite[Section 3]{lewis2015proximal} for a recent perspective on this method.

Departing from existing approaches to adress such complex geometries, we rely on tameness assumptions. In the nonsmooth  nonconvex world, this assumption allows to use a powerful geometric property, the so-called nonsmooth Kurdyka-\L ojasiewicz (KL) inequality, which holds true for many classes of functions \cite{loja1963propriete,kurdyka1998gradients,bolte2007lojasiewicz,bolte2007clarke}. We require problem data to be definable, a generalization of the property of being semi-algebraic \cite{DriesMiller98,Coste99}. This rules out non favorable pathological situations such as  wild oscillations (e.g. fractals). This framework is general enough to model the vast majority of functions that can be handled numerically with a classical computer, while providing a sufficient condition for KL inequality to hold \cite{bolte2007clarke}. For a smoother understanding, the reader non familiar with tame geometry may replace ``definable'' by ``semi-algebraic''. Recall that an object is said to be real semi-algebraic if it can be defined as ``the solution set of one of several systems of polynomial equalities and inequalities''.

The use of KL inequality in nonconvex optimization provided significant advances in understanding convergence of first order methods \cite{absil2005convergence,attouch2009convergence,attouch2010proximal,attouch2013convergence,bolte2007lojasiewicz,bolte2013proximal}. However, the application of these techniques in complex geometric settings, such as composite optimization, remains an important challenge. A recent breakthrough has been made in \cite{bolte2016Majorization}, which describes a general convergence analysis of Sequential Quadratic Programming methods \cite{fletcher85,auslender2013extended,snopt}. This is an important example of complex geometric structures with challenging convergence analysis. To overcome the difficulty of dealing with problems with complex geometries in this context, \cite{bolte2016Majorization} has introduced a new methodology based on the so-called value function.

We propose a general convergence guaranty for a variant of the composite Gauss-Newton method \cite{burke1985descent,burke1995gaussnewton}. The main idea consists in viewing Gauss-Newton method along the lines of \cite{bolte2016Majorization} through the value function approach. An important improvement brought to \cite{bolte2016Majorization} is the integration of a general backtracking search in the analysis. This allows to deal with smooth functions whose gradients are merely {\em locally} Lipschitz continuous. This flexibility is extremely important from a practical point of view and requires non trivial extensions (see \cite{noll2012convergence} for works in this direction). To the best of our knowledge this result is new, it relies on easily verifiable assumptions and it is flexible enough to encompass many problems encountered in practice. In addition, we emphasize that it provides a simple and intuitive way to highlight the potential of the value function approach designed in \cite{bolte2016Majorization}.

In Section \ref{sec:pbSetting}, we describe the problem of interest, the main assumptions and the algorithm. We also state our main convergence result. We introduce notations, important definitions and results from nonsmooth analysis and geometry in Section \ref{sec:preliminary}. The value function and its most important properties are described in Section \ref{sec:valueFunction}. Section \ref{sec:proof} contains the proof of the main result.

%{\jb Pour les fonctions  $C^1$ etc... l'exposant est toujours en haut....}

\section{Problem setting and main result}
\label{sec:pbSetting}
We consider the composite optimization problem.
\begin{align}
	\label{eq:mainProblem}
	\min_{x \in D \subset \RR^n} g(F(x)),
\end{align}
Our main standing assumption is the following.
\begin{assumption}
	\label{ass:mainAssumption}
	$F\colon \RR^n \to \RR^m$ is $\mathcal{C}^2$ and  $g\colon \RR^m \to \RR$ is convex and finite valued. $D \subset \RR^n$ is convex and closed. $F$, $g$ and $D$ are definable {\ed in the same o-minimal structure on the field of real numbers (fixed throughout the text).}%{\jb FIXER auparavant une structure o-min et expliquer qu'elle changera pas.. Sinon on croit que  tu fais pas gaffe (definable est comme ouvert ..c'est "dans" un truc)}.
\end{assumption}
Note that Assumption \ref{ass:mainAssumption} ensures that $g$ is locally Lipschitz continuous \cite[Theorem 10.4]{rockafellar1970convex}. For any $i = 1, 2,\ldots, m$, we use the notation $f_i$ for the $\mathcal{C}^2$ function that corresponds to coordinate $i$ of $F$. %We require the following additional assumption
%\begin{itemize}
%	%\item For any $x \in \RR^n$, $y \to g(y,x)$ is nondecreasing with respect to each coordinate.
%	%\item For any $\tilde{F}$ with strongly convex coordinates, $x \to g(\tilde{F}(x), x)$ is strongly convex.
%	\item Or $g$ is locally Lipschitz in its second argument.
%	\item Some abstract qualification condition (may be not required).
%\end{itemize}
We denote by $\nabla F(x)$ the Jacobian matrix of $F$ at $x$:
\begin{align*}
	\nabla F(x) = \left[ \frac{\partial f_i}{\partial x_j}(x) \right] \in \RR^{m \times n}.
\end{align*}
We will analyse the numerical scheme (\ref{eq:mainAlgo}) which is a backtracking variant of the composite Gauss-Newton descent method \cite{burke1985descent,burke1995gaussnewton,chong2002convergence,chong2007majorizing,lewis2015proximal}. 
%{\jb Cette citation me semble tres cavaliere n'est ce pas l'algo de Lewis-Wright+Armijo? Si oui dire clairement. Moi je l'appelerai regularisation proximale de Gauss-Newton. Faire des recherches biblio dessus. }{\ed En fait ils proposent le meme algo avec un schema de backtracking un petit peu different. Je n'ai pas souligne cette nuance, il faut que je le mentionne dans l'intro. D'autre part, modulo le backtracking, il y a deja un gros travail de biblio qu'ils ont fait dans leur papier (je le mentionne dans l'intro). Comme il faudra re-ecrire cette partie, j'attend qu'on en parle.}

\noindent
{\ed
\begin{remark}
	The dynamical feature of the step-size parameter $\mu_k$ is akin to a backtracking procedure. Indeed, Assumption \ref{ass:mainAssumption} ensures that $F$ is locally smooth and $g$ is locally Lipschitz continuous. However the smoothness and Lipschitz continuity moduli may be unknown and not be valid in a global sense. They have to be estimated in an online fashion to prevent unwanted divergent behaviours.
\end{remark}}
%{\jb SUGGESTION (a arranger): Remark. The dynamical feature of the step-size parameter $\mu_k$ is akin to a backtracking procedure:  Indeed in our setting  the Lipschitz constant of the smooth part is unknown and has to be estimated dynamically.\\
%
%	La notation $\beta^k$ pour un nombre ne me plait pas du tout et ne se justifie pas {\ed J'ai mis $\mu_k$}\\
%
%
%HERE WE SEE A WEAKNESS: l'algorithme peut diverger pour deux raisons a priori, la premiere naturelle est celle d'un defaut de coercivite, la deuxieme bien embetante est que le beta est choisi a une vitesse qui rentre en resonnance avec le caractere non globalement Lipchitz de $F$, ie des pas toujours trop long et jamais r\'eajust\'es a bonne vitesse \\
%Il faut donc faire une remarque en expliquant que le cas coercif/global lip evite ce probleme. Il y a des methodes plus raffinees mais je suis en train de faire ca avec Marc dans un contexte tout a fait different... }
%

\bigskip
The next Lemma shows that the algorithm is well defined {\ed and the sequence of objective values is nonincreasing} (the proof is given in Section \ref{sec:valueFunction}). The next Theorem is our main result and the proof is given in Section \ref{sec:proof}.
\begin{lemma}
	\label{lem:wellDefinedZero}
	For each $k$, the while loop stops after a finite number of iterations and we have 
	\begin{align*}
		g\left(F(x_{k+1})\right) \leq g(F(x_k) + \nabla F(x_k) (x_{k+1}  - x_k)) + \frac{\mu_k}{2} \|x_{k+1} - x_k\|^2,\\
	\end{align*}
	{\ed and $\{g(F(x_k))\}_{k \in \NN}$ is a nonincreasing sequence.}
\end{lemma}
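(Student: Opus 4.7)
The plan is to handle the three claims in order: termination of the backtracking, the post-exit descent inequality, and the monotonicity of $\{g(F(x_k))\}$. The second and third claims should follow almost immediately once termination is established, so the real work lies in showing that the while loop halts after finitely many updates of $\mu_k$.

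For termination, the strategy is to fix a compact neighborhood $\mathcal{N}_k$ of $x_k$ (say a closed ball of some radius $r>0$) and extract on it two local moduli: a bound $M_k$ on $\|\nabla^2 F\|$ over $\mathcal{N}_k$ (finite since $F\in\mathcal{C}^2$), and a Lipschitz constant $L_{g,k}$ of $g$ on a compact set that contains both $F(\mathcal{N}_k)$ and the affine image $F(x_k)+\nabla F(x_k)(\mathcal{N}_k-x_k)$ (finite since $g$ is convex and finite-valued, hence locally Lipschitz). Then a second-order Taylor expansion of $F$ at $x_k$ combined with this Lipschitz bound gives, for every $y\in\mathcal{N}_k$,
\begin{equation*}
g(F(y)) \;\le\; g\bigl(F(x_k)+\nabla F(x_k)(y-x_k)\bigr) \;+\; \tfrac{L_{g,k}M_k}{2}\|y-x_k\|^2,
\end{equation*}
so that the while-loop test is automatically satisfied as soon as $\mu_k\ge L_{g,k}M_k$ \emph{and} the candidate $\tilde x_{k+1}$ lies in $\mathcal{N}_k$.

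The key remaining step, and the main obstacle I anticipate, is to certify that $\tilde x_{k+1}$ does enter $\mathcal{N}_k$ as $\mu_k$ grows. For this I would exploit that $x_k$ is feasible in the subproblem of Step~1, so by optimality of $\tilde x_{k+1}$,
\begin{equation*}
g\bigl(F(x_k)+\nabla F(x_k)(\tilde x_{k+1}-x_k)\bigr)+\tfrac{\mu_k}{2}\|\tilde x_{k+1}-x_k\|^2 \le g(F(x_k)).
\end{equation*}
Using the local Lipschitz constant of $g$ again to lower bound the left-most term by $g(F(x_k))-L_{g,k}\|\nabla F(x_k)\|\,\|\tilde x_{k+1}-x_k\|$, one obtains the displacement estimate $\|\tilde x_{k+1}-x_k\|\le 2L_{g,k}\|\nabla F(x_k)\|/\mu_k$, which vanishes as $\mu_k\to\infty$. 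Since $\mu_k$ is multiplied by $\tau>1$ at each failed test, after finitely many backtracks both conditions ($\mu_k\ge L_{g,k}M_k$ and $\tilde x_{k+1}\in\mathcal{N}_k$) hold simultaneously, and the loop exits. A small subtle point here is that the Lipschitz constant used for the displacement bound and the one used in the Taylor descent bound must be taken on a common neighborhood; fixing $\mathcal{N}_k$ up front (and enlarging the associated $L_{g,k}$ as needed) removes this issue.

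The remaining claims are then immediate. The inequality $g(F(x_{k+1}))\le g(F(x_k)+\nabla F(x_k)(x_{k+1}-x_k))+\tfrac{\mu_k}{2}\|x_{k+1}-x_k\|^2$ is precisely the negation of the while condition at exit, with $x_{k+1}=\tilde x_{k+1}$. For monotonicity, combining this with the optimality inequality already used above (take $y=x_k$ in Step~1) yields
\begin{equation*}
g(F(x_{k+1})) \le g(F(x_k)+\nabla F(x_k)(x_{k+1}-x_k))+\tfrac{\mu_k}{2}\|x_{k+1}-x_k\|^2 \le g(F(x_k)),
\end{equation*}
which is the desired nonincreasing property.
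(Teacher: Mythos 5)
Your overall strategy coincides with the paper's: both reduce termination of the backtracking loop to a local second-order Taylor bound on $F$ combined with a local Lipschitz bound on $g$, and both obtain the exit inequality as the negation of the while-test and monotonicity from the feasibility of $y=x_k$ in the Step~1 subproblem (those last two parts of your argument are exactly the paper's). The difference lies in how the location of the candidate $\tilde x_{k+1}$ is controlled, and this is where your write-up has a circularity. To derive the displacement estimate $\|\tilde x_{k+1}-x_k\|\le 2L_{g,k}\|\nabla F(x_k)\|/\mu_k$ you lower-bound $g\bigl(F(x_k)+\nabla F(x_k)(\tilde x_{k+1}-x_k)\bigr)$ using ``the local Lipschitz constant of $g$'', but that constant is only valid on a compact set built from $\mathcal{N}_k$, and the point in question lies in that set only if $\tilde x_{k+1}\in\mathcal{N}_k$ --- which is precisely what the estimate is supposed to establish. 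Enlarging $L_{g,k}$ on a set fixed up front, as you suggest, does not resolve this, since for small $\mu_k$ the candidate can a priori be anywhere.

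The gap is easily repaired in either of two ways. (i) Use convexity rather than Lipschitz continuity: since $g$ is convex and finite-valued, $\partial g(F(x_k))\neq\emptyset$ and the subgradient inequality $g(z)\ge g(F(x_k))+\langle v,z-F(x_k)\rangle$ holds \emph{globally}, giving $\|\tilde x_{k+1}-x_k\|\le 2\|v\|\,\|\nabla F(x_k)\|/\mu_k$ with no localization needed. (ii) Argue as the paper does: for every $\mu\ge\mu_0$ the minimizer satisfies $h(x_k,\tilde x_{k+1})+\tfrac{\mu_0}{2}\|\tilde x_{k+1}-x_k\|^2\le g(F(x_k))$, so it lies in a sublevel set of the $\mu_0$-strongly convex subproblem, which is bounded uniformly in $\mu\ge\mu_0$; one then chooses the compact sets (and hence $M_k$, $L_{g,k}$) to contain all candidates at once. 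Note finally that the paper proves the key estimate (its Lemma~4.2) uniformly over an arbitrary compact set $S$, not just $S=\{x_k\}$, because that uniform version is reused in the proof of the main theorem; your per-iterate version suffices for the present lemma but would have to be upgraded later.
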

%\st{From Lemma} \ref{lem:wellDefinedZero} \st{and Step 1, we have that $g(F(x_k))$ is a decreasing nonincreasing sequence, hence we have a descent algorithm.} Our main result is the following.
\begin{theorem}
	\label{th:mainTheorem}
	Under Assumption \ref{ass:mainAssumption}, we have the alternatives when $k \to +\infty$.
	\begin{itemize}\itemsep-.2em
		\item $\|x_k\| \to + \infty$.
		\item $x_k$ converges to a critical point of Problem (\ref{eq:mainProblem}), the sequence $\|x_{k+1} - x_k\|$ is summable, $\{\mu_k\}_{k\in \NN}$ is bounded.
	\end{itemize}
\end{theorem}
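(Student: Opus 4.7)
The plan is to follow the value function strategy of \cite{bolte2016Majorization}, adapted to the composite Gauss-Newton subproblem and to the online choice of $\mu_k$. Suppose the first alternative fails, so $\liminf_k \|x_k\|<\infty$; we want to show the second alternative. Fix an accumulation point $x^\star$ and a bounded open neighborhood $U$ of $x^\star$. On $U$ the $\mathcal{C}^2$ map $F$ has a uniformly bounded Hessian, and the finite-valued convex $g$ is Lipschitz on any bounded subset of $\RR^m$ covering $F(U)$; a Taylor expansion then yields a constant $L>0$ such that
\[ g(F(y))\leq g(F(x)+\nabla F(x)(y-x))+\tfrac{L}{2}\|y-x\|^2\quad\text{for all }x,y\in U. \]
This estimate forces the backtracking loop to terminate as soon as $\mu_k\geq L$, so that as long as the iterates remain in $U$ the boundedness $\mu_k\in[\mu_0,\tau L]$ is automatic.

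The next step brings in the value function
\[ V(x,\mu):=\min_{y\in D}\,g(F(x)+\nabla F(x)(y-x))+\tfrac{\mu}{2}\|y-x\|^2, \]
whose properties are to be established in Section~\ref{sec:valueFunction}. Definability of $F$, $g$, $D$ and preservation of definability under parametric minimization make $V$ definable, hence a KL function \cite{bolte2007clarke}. Taking $y=x_k$ in the subproblem gives $V(x_k,\mu_k)\leq g(F(x_k))$, and the $\mu_k$-strong convexity of the subproblem combined with the acceptance rule of Step~2 yields the sufficient decrease
\[ g(F(x_{k+1}))\leq V(x_k,\mu_k)\leq g(F(x_k))-\tfrac{\mu_k}{2}\|x_{k+1}-x_k\|^2. \]
The first-order optimality condition for the subproblem, namely $0\in \nabla F(x_k)^\top\partial g(F(x_k)+\nabla F(x_k)(x_{k+1}-x_k))+\mu_k(x_{k+1}-x_k)+N_D(x_{k+1})$, combined with the $\mathcal{C}^2$ smoothness of $F$ and the local Lipschitz property of $g$ on $U$, should produce an element of the limiting subdifferential of $g\circ F+\delta_D$ at $x_{k+1}$ (or equivalently of $V$ along the iterates) whose norm is bounded by a constant multiple of $\|x_{k+1}-x_k\|$, the factor depending only on the local smoothness data of $U$ and on the uniform bound on $\mu_k$.

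The final step is the classical KL telescoping argument of \cite{attouch2013convergence,bolte2013proximal} in the value-function form of \cite{bolte2016Majorization}. Monotonicity gives a common objective value at all accumulation points, and combining sufficient decrease, the subgradient estimate, and the KL inequality applied near $x^\star$ yields $\sum_k\|x_{k+1}-x_k\|<\infty$. Summability upgrades the accumulation point to a global limit $x_k\to x^\star$, which justifies a posteriori the localization inside $U$ and the boundedness of $\{\mu_k\}$, while passing to the limit in the subgradient bound yields criticality of $x^\star$. The main obstacle is the subgradient estimate: the subproblem is written around the Gauss-Newton linearization $F(x_k)+\nabla F(x_k)\cdot$, whereas criticality and the KL argument live at the true objective, so one must trade the linearized subgradient for a genuine subgradient of $g\circ F$ at $x_{k+1}$ while keeping the error of order $\|x_{k+1}-x_k\|$. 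A secondary technical point is the trapping step that confines the tail of the sequence inside $U$; it is standard but must be carried out with the step-size $\mu_k$ varying along the iterations, which is precisely where the flexibility of the value function approach is needed.
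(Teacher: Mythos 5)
Your architecture matches the paper's (value function, sufficient decrease, KL, trapping), but the step you yourself flag as ``the main obstacle'' is precisely the one the paper's proof is built to circumvent, and the resolution you hint at would not work. You propose to convert the optimality condition of the subproblem, which involves some $v \in \partial g(F(x_k)+\nabla F(x_k)(x_{k+1}-x_k))$, into a genuine limiting subgradient of $g\circ F + i_D$ at $x_{k+1}$ of norm $O(\|x_{k+1}-x_k\|)$. For nonsmooth $g$ there is no control relating $v$ to $\partial g$ at $F(x_{k+1})$ or at $F(x_k)$: the subdifferential of a convex function is set-valued and can jump, so no nearby genuine subgradient need exist, and the relative-error condition of \cite{attouch2013convergence} for $g\circ F+i_D$ fails in general. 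This is exactly the difficulty stated after (\ref{eq:optimConditions}), and it is why the KL inequality is applied not to $g\circ F+i_D$ but to the value function $V_\mu$ itself. The estimate replacing your missing one is Lemma \ref{lem:valFun}(5), $\dist(0,\partial V_\mu(x)) \leq (K+\mu)\|x-p_\mu(x)\|$, proved by a different mechanism: $V_\mu$ is definable hence differentiable almost everywhere, at points of differentiability $(\nabla V_\mu(\bar x),0)$ is a Fr\'echet subgradient of the parametric objective at $(\bar x, p_\mu(\bar x))$, and the partial subdifferential of $h$ in $x$ (Lemma \ref{lem:approx}(4)) carries the factor $p_\mu(\bar x)-\bar x$, which gives the bound; one then closes up by density and continuity of $p_\mu$. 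Your parenthetical ``or equivalently of $V$ along the iterates'' is not an equivalence; only the value-function route goes through. Criticality of the limit is then obtained from the fixed-point characterization (Lemma \ref{lem:valFun}(3)), not by passing to the limit in a subgradient bound for $g\circ F$.

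A second, smaller gap: you treat the variability of $\mu_k$ as a pure localization issue, but it also changes the KL function, since each $V_\mu$ is a different function with its own desingularizer $\varphi_\mu$. The paper's device is to observe that near the accumulation point the backtracking confines $\mu_k$ to the finite set $\Theta=\{\mu_0\tau^i;\ i\in\NN\}\cap[\mu_0,\mu_+]$ and to use the single function $\varphi=\sum_{\mu\in\Theta}\varphi_\mu$, which dominates each $\varphi_\mu$ and makes $\varphi(V_{\mu_k}(x_k))$ a valid Lyapunov quantity across iterations with different $\mu$; without this uniformization the telescoping in your final step does not close, because consecutive terms $V_{\mu_{k-1}}(x_{k-1})$ and $V_{\mu_k}(x_k)$ involve different functions. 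The rest of your outline (local descent lemma bounding $\mu_k$, sufficient decrease via feasibility of $y=x_k$, trapping and Cauchy argument) agrees with the paper.
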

{\ed
\begin{remark}
	In the alternatives of Theorem \ref{th:mainTheorem}, the unbounded case is due to a lack of coercivity rather than a bad adjustment of the local model through $\mu_k$. Indeed, if we suppose that $x_0$ is chosen such that the set $D\cap \left\{ x \in \RR^n; \; g(F(x)) \leq g(F(x_0)) \right\}$ is compact, Lemma \ref{lem:wellDefinedZero} ensures that the divergent option cannot hold and the sequence converges. This phenomenon was guessed in \cite{attouch2010proximal} and also appeared in \cite{bolte2016Majorization}. Accounting for the dynamical feature of $\mu_k$ in our analysis is a contribution of this work.
\end{remark}
}
%{\jb Attention il faut montrer/remarquer que le cas non borne ne renferme pas de gags cf ma remarque en haut. Dire aussi que l'alternative apparait dans notre papier, ca permet d'ailleurs de comprendre la difference}
\medskip
\section{Notations and preliminary results}
\label{sec:preliminary}
\subsection{Notations}
The symbol $\partial$ refers to the limiting subdifferential. The notion of a critical point is that of a limiting critical point: zero is in the limiting subdifferential, a necessary condition of optimality (nonsmooth Fermat's rule). We refer, for instance, the reader to \cite[Chapter 8]{rockafellar1998variational} for further details on the subject. 

%{\jb Ce qui suit est a reprendre avec plus de sobriete...} {\ed Je la refais juste apres ce paragraphe sera donc a supprimer, remplace par le suivant. Je n'ai pas compris PL.}[ We refer to definability of a subset of $\RR^p$ as \st{membership} to a boolean algebra of subsets of $\RR^p$,  $\mathfrak{S}_p$, that corresponds to an o-minimal structure on $(\RR,\, + ,\, \cdot)$. Representability of a function means definability of its graph. We refer the reader to \cite{Coste99} for an introduction and \cite{Dries-Miller96} for an overview of the vast amount of results available for these objects. ] The most famous o-minimal structure is that of semi-algebraic sets ({\jb PL??}), but there exist many \st{extensions} {\jb o-minimal structures (voir Dries-Miller (ou Le Gall mais bon...).)}. {\jb CONFUS ET A DIRE AVANT: }When refe{\jb r}ring to definability, we assume that the corresponding o-minimal structure is fixed throughout the text.

{\ed
	An o-minimal structure on the field of real numbers is a structured collection of definable subsets of finite dimensional Euclidean spaces. It is required to satisfy some of the properties of semi-algebraic sets. Semi-algebraic sets form an o-minimal structure but there are many extensions. An introduction to the subject can be found in \cite{Coste99} and a survey of relevant results is available in \cite{Dries-Miller96}. In Assumption \ref{ass:mainAssumption}, we have fixed an o-minimal structure. Definable sets are subsets of Euclidean spaces which belong to it and a definable function is a function which graph is definable.
}

The normal cone to $D$ at $x \in D$ is denoted by $N_D(x)$ and the indicator function of $D$ is denoted by $i_D$ (whose value  is constantly $0$ on $D$, $+\infty$ otherwise). $\|\cdot\|$ denotes the Euclidean norm (which is semi-algebraic). Being given a function $f\colon \RR^p \to \RR$, real numbers $a$ and $b$, we set $[a < f < b] = \{x \in \RR^n:\; a < f(x) < b\}$.

\subsection{Results from nonsmooth analysis}
The next Lemma provides a formula for the subdifferential of the objective function.
\begin{lemma}
	\label{lem:subdiff}
	The chain rule holds for $g(F(\cdot))$.
	\begin{align*}
		\partial g(F(x)) = \nabla F(x)^T v
	\end{align*}
	where $v \in \partial g$ at $F(x)$. Furthermore $g(F(\cdot))$ is subdifferentially regular.
\end{lemma}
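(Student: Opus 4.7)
The plan is to recognize that this is a direct instance of the standard chain rule for the composition of a smooth mapping with a locally Lipschitz function, applied under the very mild qualification that reduces to nothing here thanks to convexity. So there is little that is genuinely difficult; the work is in checking that all hypotheses of the invoked result are satisfied.

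First, I would record that under Assumption \ref{ass:mainAssumption} the outer function $g$ is convex and finite-valued on all of $\RR^m$, hence locally Lipschitz continuous (as already noted in the paper via \cite[Theorem 10.4]{rockafellar1970convex}). Two consequences follow immediately and are the only ingredients needed. On one hand, the limiting subdifferential of $g$ coincides with the convex subdifferential, and $g$ is subdifferentially regular in the sense of \cite[Chapter 8]{rockafellar1998variational} (convex functions are always regular). On the other hand, local Lipschitz continuity forces the horizon (singular) subdifferential to be trivial: $\partial^\infty g(y)=\{0\}$ at every $y\in\RR^m$, so the qualification condition needed for a non-smooth chain rule holds automatically at $y=F(x)$.

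Next, since $F$ is $\mathcal{C}^2$ it is in particular strictly differentiable, with Jacobian $\nabla F(x)$. I would now invoke the chain rule of Rockafellar--Wets \cite[Theorem 10.6]{rockafellar1998variational}: for a strictly differentiable mapping $F$ composed with a locally Lipschitz function $g$, under the qualification $\partial^\infty g(F(x))\cap \ker \nabla F(x)^T=\{0\}$ (vacuous here, as observed above), one has
\begin{align*}
\partial (g\circ F)(x)=\nabla F(x)^T\,\partial g(F(x)),
\end{align*}
which is the claimed formula. The same theorem states that if, in addition, $g$ is regular at $F(x)$, then $g\circ F$ is regular at $x$; combined with the convex regularity of $g$ established above, this yields subdifferential regularity of $g(F(\cdot))$.

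The main obstacle, such as it is, is simply bookkeeping: making sure the qualification condition is correctly reduced to triviality by local Lipschitzness and that the ``strict differentiability'' hypothesis is matched to our $\mathcal{C}^2$ assumption on $F$. No computation, KL-type argument, or tameness input is needed for this lemma; the role of the lemma in the sequel is merely to make available an explicit formula for $\partial(g\circ F)$ and its regularity when the value-function analysis is deployed.
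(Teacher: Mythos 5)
Your proof is correct and follows exactly the paper's route: reduce the qualification condition to triviality via local Lipschitz continuity of the convex finite-valued $g$ (so $\partial^\infty g=\{0\}$), note regularity from convexity, and invoke \cite[Theorem 10.6]{rockafellar1998variational}. You simply spell out the bookkeeping (strict differentiability of $F$, the kernel condition) that the paper leaves implicit.
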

\begin{proof}
	Since $g$ is locally Lipschitz continuous, its horizon subdifferential only contains $0$. Since it is convex, it is subdifferentially regular and the result follows from \cite[Theorem 10.6]{rockafellar1998variational}.
\end{proof}
We consider the function $h:\RR^n \times \RR^n \to \RR$, given by
\begin{align}
	\label{eq:defh}
	h(x, y)= g(F(x) + \nabla F(x) (y  - x)) \text{ for any $x, y \in \RR^n$.}
\end{align}
\begin{lemma}
	\label{lem:approx}
	$h$ satisfies the properties:
	\begin{enumerate}
		\item $h$ is continuous and subdifferentially regular.
		\item $h(x, x) = g(F(x))$ {\ed for any $x\in \RR^n$}.
		\item $\frac{\partial h(x, y)}{\partial y} = \{\nabla F(x)^T v;\; v \in \partial g(F(x) + \nabla F(x)(y - x))\}$ {\ed for any $x, y\in \RR^n$}.
		\item $\frac{\partial h(x, y)}{\partial x} = \{( \sum_{i=1}^m v_i \nabla^2 f_i(x) )  (y - x );\; v = (v_1, v_2, \ldots, v_m)^T \in \partial g(F(x) + \nabla F(x)(y - x))  \}$ {\ed for any $x, y\in \RR^n$}.
		\item $h$ is convex in its second argument.
	\end{enumerate}
\end{lemma}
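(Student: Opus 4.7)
The plan is to reduce everything to a chain rule argument, viewing $h$ as the composition of the convex function $g$ with the $\mathcal{C}^1$ inner map
\[
\phi\colon \RR^n \times \RR^n \to \RR^m,\qquad \phi(x,y) = F(x) + \nabla F(x)(y-x),
\]
which is $\mathcal{C}^1$ because $F$ is $\mathcal{C}^2$. Items 2 and 5 are essentially definitional: $\phi(x,x) = F(x)$ gives (2); for fixed $x$ the map $y \mapsto \phi(x,y)$ is affine, so $y \mapsto g(\phi(x,y))$ is convex as the composition of a convex function with an affine map, which is (5).

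For (1) the continuity of $h$ follows from the continuity of $\phi$ (since $F$ and $\nabla F$ are continuous) and the continuity of $g$ (which holds because $g$ is convex and finite-valued, hence locally Lipschitz by \cite[Theorem 10.4]{rockafellar1970convex}). Subdifferential regularity of $h$ then follows from the composition rule for regular functions \cite[Theorem 10.6]{rockafellar1998variational}, exactly as in the proof of Lemma \ref{lem:subdiff}: $g$ is convex, hence regular, its horizon subdifferential is reduced to $\{0\}$ since $g$ is locally Lipschitz, and $\phi$ is smooth, so the qualification hypothesis is trivially satisfied.

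For items 3 and 4, I would apply the same chain rule to obtain
\[
\partial h(x,y) = \nabla \phi(x,y)^T\, \partial g(\phi(x,y)),
\]
and then compute the two blocks of the Jacobian $\nabla \phi(x,y)$ explicitly. Differentiating $\phi_i(x,y) = f_i(x) + \nabla f_i(x)^T (y-x)$ componentwise gives the partial with respect to $y$ equal to $\nabla F(x)$, which yields (3); the partial with respect to $x$ of $\phi_i$ is
\[
\nabla f_i(x) + \nabla^2 f_i(x)(y-x) - \nabla f_i(x) = \nabla^2 f_i(x)(y-x),
\]
so that applying the transpose Jacobian to a vector $v=(v_1,\ldots,v_m)^T \in \partial g(\phi(x,y))$ produces $\sum_{i=1}^m v_i \nabla^2 f_i(x)(y-x)$, which is (4).

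The only delicate point is verifying the hypotheses of the nonsmooth chain rule for the two-variable composition; this is standard and goes through thanks to the $\mathcal{C}^1$ smoothness of $\phi$ together with the Lipschitz/regularity properties of $g$ noted above, so no further obstacle is expected.
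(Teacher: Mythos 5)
Your proposal is correct and follows essentially the same route as the paper: the paper also treats items 1, 3 and 4 via the nonsmooth chain rule of \cite[Theorem 10.6]{rockafellar1998variational} applied to the composition of the convex, locally Lipschitz $g$ with the smooth inner map (invoking the same argument as in Lemma \ref{lem:subdiff}), and handles items 2 and 5 exactly as you do. Your explicit computation of the Jacobian blocks of $\phi$ merely spells out a step the paper leaves implicit.
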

\begin{proof}
	\begin{itemize}\itemsep-.2em
			\item[1, 3, 4.] Continuity follows from Assumption \ref{ass:mainAssumption}, regularity and subdifferential formulas from the same argument as in Lemma \ref{lem:subdiff}.
			\item[2.] Is by the definition of $h$ in (\ref{eq:defh}).
		\item[5.] $y \to g(F(x) + \nabla F(x) (y  - x))$ is the composition of a convex function and an affine map and hence is convex.
	\end{itemize}
	\vskip -.5cm
\end{proof}
\subsection{Results from geometry}
%\st{Most of the functions used in practice are definable (e.g. semi-algebraic functions).} 
{\ed The next remark gathers important properties of the class of definable functions}.
{\ed
	\begin{remark}
		\label{rem:representableClose}
		Semi-algebraic functions are definable. Definable functions are closed under addition, multiplication, composition, differentiation, projection and partial minimization.	Detailed proof of these facts may be found in \cite{DriesMiller98, Coste99}. See also \cite[Theorem 2.2]{attouch2013convergence} for a specific example in optimization.
	\end{remark}
}
%\begin{lemma}
%	\label{lem:representableClose} [{\jb Pas un lemme \c ca, trop vague!! Faire une remarque}]
%	Indicator of definable sets and polyomials are definable. Definable functions are closed under addition, multiplication, composition, differentiation, projection and partial minimization.	
%\end{lemma}
%\begin{proof}
%	Detailed proof of these facts may be found in \cite{DriesMiller98, Coste99}. See also \cite[Theorem 2.2]{attouch2013convergence} for a detailed example in optimization.
%\end{proof}
In the context of dynamical systems, a fundamental question is that of the growth of the subdifferential around critical points. This question has a long history in geometry \cite{loja1963propriete, kurdyka1998gradients, bolte2007lojasiewicz, bolte2007clarke}. In the remainder of this text, KL is a short hand for Kurdyka-\L ojasiewicz. We will use the following definition {\ed from \cite{attouch2010proximal}.}

\begin{definition}[KL function]{\rm
	Let $f$ be a proper lower semi-con\-ti\-nuous function from $\RR^p$ to $(-\infty, +\infty]$.
	\begin{enumerate}
		\item[(i)] $f$ has the {\em Kurdyka-\L{}ojaziewicz (KL) property} at $\bar{x} \in \dom \partial f$, if there exist $\alpha \in (0, +\infty]$, a neighborhood $V$ of $\bar{x}$ and a function $\varphi \colon [0, \alpha] \to \RR$, non-negative, concave and continuous, $\mathcal{C}^1$ on $(0,\alpha)$ with $\varphi'>0$ and $\varphi(0) = 0$ such that, for all $x \in V \cap [f(\bar{x}) < f(x) < \alpha]$.
		\begin{equation}\label{loja}\varphi'(f(x) - f(\bar{x})) \, \dist (0, \partial f(x)) \geq 1\end{equation}
	\item[(ii)] The function $f$ is said to be a {\em KL function} if it has the KL property at each point of $\dom\partial f$.
	\end{enumerate}}
\end{definition}
KL property rules out pathological oscilations around critical points. It turns out that all definable functions, even nonsmooth extended-valued functions, have the KL property.
\begin{theorem}[Theorem 11 \cite{bolte2007clarke}]\label{th:KL}
Let $g$ be a proper lower semi-continuous function from $\RR^p$ to $(-\infty, +\infty]$. If $g$ is definable, then $g$ is a KL function.
\end{theorem}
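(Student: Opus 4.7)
The plan is to prove the KL property at an arbitrary $\bar{x}\in\dom\partial g$; by subtracting a constant we may assume $g(\bar{x})=0$, and we may also dispose of the trivial case in which $[0<g<\alpha]$ is empty near $\bar{x}$ for small $\alpha$. The argument combines three hallmarks of o-minimal geometry: the graph of the limiting subdifferential of a definable function is itself definable (so the slope map will be a definable object); partial minimization preserves definability (Remark~\ref{rem:representableClose}); and one-variable definable functions are piecewise continuous and monotone with a controlled asymptotic behaviour at limit points (the monotonicity theorem, together with a Puiseux-type expansion).

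First I would verify that the slope map $\nu(x):=\dist(0,\partial g(x))$ is definable. Writing the Fr\'echet subdifferential via its $\varepsilon$--$\delta$ definition, and the limiting subdifferential through closure in $\RR^p\times\RR^p$, the graph of $\partial g$ is obtained from the epigraph of $g$ by first-order operations (products, projections, complements, closures), each of which preserves definability in the fixed o-minimal structure; the distance to a definable set is definable by partial minimization, so $\nu$ is definable. Next, a definable Sard-type statement (the critical set is definable and its image, being a definable subset of $\RR$, has dimension zero and is therefore finite) ensures that the set of critical values of $g$ is finite. Fix a bounded open neighbourhood $V$ of $\bar{x}$ and pick $\alpha>0$ so small that $(0,\alpha)$ contains no critical value. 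Then $m(r):=\inf\{\nu(x):x\in\overline V,\ g(x)=r\}$ is definable on $(0,\alpha)$, and a compactness and lower-semicontinuity argument, combined with the absence of critical points at heights in $(0,\alpha)$, shows $m(r)>0$ on $(0,\alpha)$, possibly after shrinking $\alpha$.

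The last step is to manufacture $\varphi$ from $m$. The monotonicity theorem applied to the definable one-variable function $m$ yields $\alpha'\in(0,\alpha]$ on which $m$ is continuous, monotone, and admits a definable asymptotic profile at $0^+$. The central and most delicate point is to establish that $1/m$ is integrable on a right neighbourhood of $0$: the standard route is to construct a definable descent curve $\gamma\colon[0,\varepsilon)\to V$ with $\gamma(0)=\bar{x}$ and $g\circ\gamma$ strictly decreasing, whose finite arc length bounds $\int_0^{g(\gamma(\varepsilon))}dr/m(r)$ via the slope inequality $|(g\circ\gamma)'(t)|\leq \nu(\gamma(t))\|\gamma'(t)\|$ (this is Kurdyka's argument, transported to the nonsmooth setting). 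Once integrability is obtained, set $\varphi(s):=\int_0^s dr/\tilde m(r)$, where $\tilde m$ is a nondecreasing definable minorant of $m$ extracted from its Puiseux-like asymptotic in order to guarantee concavity of $\varphi$. Then $\varphi$ is continuous on $[0,\alpha']$, $\mathcal{C}^1$ on $(0,\alpha')$, concave, strictly increasing with $\varphi(0)=0$, and for every $x\in V\cap[0<g<\alpha']$ one has $\varphi'(g(x))\,\dist(0,\partial g(x))\geq \varphi'(g(x))\,\tilde m(g(x))=1$, which is \eqref{loja}. The hard part will be the integrability of $1/m$ and, behind it, the construction of the definable descent curve: this is where the full o-minimal structure (not just semi-algebraicity) is genuinely exploited.
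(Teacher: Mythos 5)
This statement is not proved in the paper at all: it is quoted verbatim as Theorem 11 of \cite{bolte2007clarke}, so there is no in-paper argument to compare yours against. Judged on its own terms, your outline follows the classical Kurdyka strategy \cite{kurdyka1998gradients} (definability of the slope $\nu(x)=\dist(0,\partial g(x))$, a tame Sard theorem giving finitely many critical values, the monotonicity theorem for the one-variable function $m$, and integrability of $1/m$), which is indeed the skeleton of how such results are established; the preliminary steps (first-order definability of the graph of $\partial g$, hence of $\nu$ and of $m$ by partial minimization, cf.\ Remark~\ref{rem:representableClose}) are sound.

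The step you yourself flag as central is, however, where the argument has a genuine gap, in two respects. First, the curve you describe cannot do the job as stated: a curve with $\gamma(0)=\bar x$, $g(\bar x)=0$ and $g\circ\gamma$ strictly decreasing exits the band $[0<g<\alpha]$ immediately, so its arc length says nothing about $\int_0^\alpha dr/m(r)$. What is actually needed is Kurdyka's \emph{talweg}: a definable selection $r\mapsto x(r)$ on $(0,\alpha)$ with $g(x(r))=r$ and $\nu(x(r))\le 2m(r)$, which is piecewise $C^1$ and of finite length because it is a bounded definable curve; the estimate $1=\frac{d}{dr}\,g(x(r))\le \nu(x(r))\,\|\dot x(r)\|$ then yields $\int_0^\alpha dr/m(r)\le 2\,\mathrm{length}(x(\cdot))<\infty$. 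Second, and more fundamentally, for a merely lower semicontinuous $g$ the slope inequality along a curve, $|(g\circ\gamma)'|\le \nu(\gamma)\,\|\dot\gamma\|$, is not available for free: $\partial g$ may be empty along the curve and need not control the variation of $g$ there. The proof in \cite{bolte2007clarke} resolves precisely this point by Whitney-stratifying the graph of $g$ and establishing a projection formula, $\dist(0,\partial g(x))\ge \|\nabla (g|_{M})(x)\|$ with $M$ the stratum through $x$, which reduces \eqref{loja} to Kurdyka's smooth theorem applied stratum by stratum and uniformized over the finitely many strata; the same formula underlies the nonsmooth Sard step you invoke. Without this stratification ingredient your sketch does not close for general lower semicontinuous definable $g$.
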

KL property has been extensively used for convergence analysis for nonconvex dynamics both in continuous and discrete time \cite{loja1963propriete,kurdyka1998gradients,absil2005convergence,bolte2007lojasiewicz,attouch2009convergence,attouch2010proximal,attouch2013convergence,bolte2013proximal,bolte2016Majorization}. We conclude this section with a density result whose proof can be found, for example, in \cite[Chapter 6]{Coste99}.
\begin{lemma}
	\label{lem:diffAlmostEvery}
	Let $f\colon \RR^p \to \RR$ be definable, then $f$ is differentiable almost everywhere.
\end{lemma}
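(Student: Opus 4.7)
The plan is to reduce the statement to the $C^1$-cell decomposition theorem, the main structural result in o-minimal geometry (see \cite{Coste99,DriesMiller98}). Informally, a definable function is, up to a finite definable partition of its domain, piecewise $C^1$, and the set where differentiability can fail is confined to the lower-dimensional pieces of that partition, which automatically carry no $p$-dimensional Lebesgue measure.

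Concretely, I would first apply cell decomposition to $f$ to obtain a finite partition $\RR^p = C_1 \sqcup \cdots \sqcup C_N$ into definable $C^1$-cells, compatible with the graph of $f$, such that $f|_{C_i}$ is $C^1$ for each $i$. I would then sort the cells by dimension. Any cell $C_i$ of dimension $p$ is, by the very definition of a cell in the decomposition, an open subset of $\RR^p$, and on such a cell $f$ is $C^1$ and hence classically differentiable at every point. Any cell $C_i$ of dimension $d<p$ is definably diffeomorphic (via a coordinate projection) to an open subset of $\RR^d$, hence is a $d$-dimensional $C^1$-submanifold of $\RR^p$ and has $p$-dimensional Lebesgue measure zero.

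Combining these observations, the set where $f$ fails to be differentiable is contained in the finite union $\bigcup_{\dim C_i < p} C_i$, which is of Lebesgue measure zero. There is no genuine obstacle once the cell decomposition theorem is accepted: it packages both the piecewise smoothness of $f$ and the dimension/measure-zero dichotomy. The only subtle point worth flagging is that differentiability on a full-dimensional cell coincides with classical differentiability in $\RR^p$, since such a cell is literally an open subset of $\RR^p$, so no boundary effect interferes with the conclusion.
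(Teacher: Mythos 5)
Your argument is correct and is essentially the proof the paper points to: the paper gives no proof of its own but cites \cite[Chapter 6]{Coste99}, which establishes this statement precisely via the $C^1$ cell decomposition theorem, with full-dimensional cells being open (so $f$ is classically differentiable there) and lower-dimensional cells being Lebesgue-null. No gaps; your flag about full-dimensional cells being genuinely open subsets of $\RR^p$ is the right point to check.
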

\section{Value function and fundamental properties}
\label{sec:valueFunction}
As in \cite{bolte2016Majorization}, we introduce the {\em iteration mapping}, $p_\mu\colon \RR^n \to D$, such that for any $x \in \RR^n$ and $\mu > 0$,
\begin{align}
	\label{eq:defp}
	p_\mu(x)=\argmin_{y \in D}\;h(x, y) + \frac{\mu}{2}\|x - y\|^2.
\end{align}
Note that, from Lemma \ref{lem:approx}, problem (\ref{eq:defp}) is $\mu$-strongly convex, hence, from closedness of $D$, the minimum is indeed attained. According to this definition, the sequence $x_k$ produced by the composite algorithm satisfies $x_{k+1} = p_{\mu_k}(x_k)$. The next result provides a link between the choice of $\mu$ and Step 2 of the algorithm.
\begin{lemma}
	\label{lem:wellDefined}
	Given a compact set $S \subset \RR^n$, there exists $\bar{\mu} > 0$ such that for any $x \in S$ and any $\mu \geq \bar{\mu}$, we have
	\begin{align*}
		g\left(F(p_\mu(x))\right) \leq g(F(x) + \nabla F(x) (p_\mu(x)  - x)) + \frac{\mu}{2} \|p_\mu(x) - x\|^2\\
	\end{align*}
\end{lemma}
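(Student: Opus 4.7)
The plan is to reduce the statement to a standard descent-lemma style inequality, with the main issue being that everything must be made uniform in $x \in S$ and $\mu \geq \bar\mu$. Under Assumption \ref{ass:mainAssumption}, $F$ is $\mathcal{C}^2$ and $g$ is convex and finite, hence locally Lipschitz, so on any bounded set we have access to both a Lipschitz constant for $\nabla F$ and one for $g$. The target inequality is exactly what one obtains by combining the quadratic remainder bound for $F$ with the Lipschitz continuity of $g$, provided all the relevant points stay in a fixed compact set.

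The first step, which I expect to be the main obstacle, is to show that $p_\mu(x)$ is confined to a compact set $S'$ as $x$ ranges over $S$ and $\mu$ ranges over $[1,+\infty)$. I would fix a continuous selection such as $\hat{y}(x) = P_D(x)$ (the projection of $x$ onto $D$), which is bounded on $S$. By optimality in \eqref{eq:defp},
\begin{align*}
h(x, p_\mu(x)) + \tfrac{\mu}{2}\|p_\mu(x) - x\|^2 \leq h(x, \hat{y}(x)) + \tfrac{\mu}{2}\|\hat{y}(x) - x\|^2.
\end{align*}
Since $g$ is convex, choosing any $s \in \partial g(F(x))$ gives $h(x,y) \geq g(F(x)) + \langle s, \nabla F(x)(y-x)\rangle$, which produces a linear lower bound $h(x, p_\mu(x)) \geq g(F(x)) - \|s\|\, \|\nabla F(x)\|\, \|p_\mu(x) - x\|$. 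For $x \in S$ compact, $\|s\|$, $\|\nabla F(x)\|$, $g(F(x))$, and $h(x,\hat{y}(x))$ are all bounded (local boundedness of the convex subdifferential and continuity of $F$). Rearranging yields a quadratic inequality in $t = \|p_\mu(x)-x\|$ of the form $\tfrac{\mu}{2}t^2 - a t - b \leq 0$ with constants $a,b$ uniform in $(x,\mu) \in S \times [1,+\infty)$, hence $t$ is uniformly bounded. Consequently, $p_\mu(x)$ lies in a compact set $S' \supset S$ independent of $\mu \geq 1$.

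Second, I would invoke the descent lemma on the compact set $S'$. Let $L_F$ be a Lipschitz constant for $\nabla F$ on $S'$; then $\|F(p_\mu(x)) - F(x) - \nabla F(x)(p_\mu(x)-x)\| \leq \tfrac{L_F}{2}\|p_\mu(x) - x\|^2$. The two arguments of $g$ in the target inequality both lie in a compact set $K \subset \RR^m$ (namely $F(S')$ together with $\{F(x) + \nabla F(x)(y-x) : x \in S, y \in S'\}$, which is bounded by continuity). Let $L_g$ be a Lipschitz constant of $g$ on a convex compact neighborhood of $K$. Then
\begin{align*}
g(F(p_\mu(x))) &\leq g(F(x) + \nabla F(x)(p_\mu(x)-x)) \\
&\quad + L_g \cdot \tfrac{L_F}{2}\|p_\mu(x) - x\|^2.
\end{align*}

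Finally, setting $\bar{\mu} = \max(1, L_g L_F)$ ensures $\tfrac{L_g L_F}{2}\|p_\mu(x) - x\|^2 \leq \tfrac{\mu}{2}\|p_\mu(x) - x\|^2$ for all $\mu \geq \bar\mu$, which yields the claim. The delicate part is really only the first step: once the iterates $p_\mu(x)$ are confined to a compact set independent of $\mu$, the descent estimate is standard and the quantities $L_F, L_g$ can be extracted from the compactness.
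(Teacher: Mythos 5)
Your proof is correct and follows essentially the same route as the paper's: confine $p_\mu(x)$ to a compact (convex) set uniformly over $x\in S$ and $\mu\geq\bar\mu$, then combine the quadratic remainder bound for $F$ with the local Lipschitz continuity of $g$ on the resulting compact image set, and take $\bar\mu \geq L_gL_F$. The only difference is that you spell out the uniform boundedness of $p_\mu(x)$ via the coercivity of the quadratic term, a step the paper merely asserts, so no further comment is needed.
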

\begin{proof}
	The optimization problem in (\ref{eq:defp}) is strongly convex and its data depends continuously on $x$, hence, for $\mu \geq \mu_0 > 0$ and $x \in S$, $p_\mu(x)$ remains bounded. Let $S_1$ be a compact convex set that contains $S \cup \{p_\mu(x);\; x \in S, \mu \geq \mu_0\}$. From Assumption \ref{ass:mainAssumption}, $\nabla F$ is globally Lipschitz continuous on $S_1$ {\ed which ensures the existence of a positive real $a$ such that $\|F(y) - \nabla F(x) (y - x)\| \leq a \|y - x\|^2$ for all $x,y \in S_1$ (see for example the proof of \cite[Lemma 1.2.3]{nesterov2004introduction})}. %{\jb ( la notation qui suit est tres maladroite et implique un ensemble plus gros que celui qui t interesse:) } {\ed J'essaie de faire mieux}. 
 {\ed Since $S$ and $S_1$ are compact, the set $S_2=\left\{ F(x);\;x \in S_1 \right\} \cup \left\{ F(x) + \nabla F(x)(y - x);\; x\in S, \, y \in S_1 \right\}$ is compact by continuity of $F$ and $\nabla F$. Hence, $g$ is globally Lipschitz continuous on $S_2$ \cite[Theorem 10.4]{rockafellar1970convex}.} This shows existence of a positive real $b$ such that $|g(F(y)) - g(F(x) + \nabla F(x) (y  - x))| \leq ab \|y - x_k\|^2$ for all $y \in S_1$ and $x \in S$. We can take $\bar{\mu} := \max\{\mu_0, 2ab\}$.
\end{proof}

\begin{proofName}[of Lemma \ref{lem:wellDefinedZero}]
	Let $\bar{\mu}$ be given by Lemma \ref{lem:wellDefined} with $S = \{x_k\}$. Condition of Step 2 is automatically satisfied for any $\mu_k \geq \bar{\mu}$ and the while loop must stop. \ed{The nonincreasing property follows by considering in addition the fact that for $k \in \NN$, $x_k \in D$ and hence $x_k$ is always feasible in the minimization problem of Step 1 with value $g(F(x_k))$.}
\end{proofName}

Lemma \ref{lem:approx} provides differentiation rules that relates the iterates $x_k$ to the subdifferential of $g$. However this result is difficult to use in the analysis. Indeed, according to Lemma \ref{lem:approx}, the optimality condition that defines $p_\mu$ can be written
\begin{align}
	\label{eq:optimConditions}
	- \nabla F(x)^T v - \mu (p_\mu(x) - x) \in N_D(p_\mu(x))
\end{align}
where $v \in \partial g(F(x) + \nabla F(x) (p_\mu(x) - x))$. We have no control on the relation between $v$ and $\partial g$ at $F(x)$ or at $F(p_\mu(x))$, which induces a major difficulty in the interpretation of the algorithm as a gradient or a subgradient method.  This features led the authors in \cite{bolte2016Majorization} to introduce and study the value function which we now consider in the composite case with the additional step size parameter feature.  
%\st{the link between the subgradient of the objective function and the iterate sequence is unpredictable}{\jb trop categorique ou alors il faut expliquer ou moderer...}.
%{\jb Ca ne sert a rien de reproduire des explications que nous avons deja bien faites, renvoie plutot a notre papier ou alors il faut les ameliorer/condenser} [The goal of this section is to take care of this unpredictability and inlude it in the convergence analysis. To this end, we introduce the parametrized value function, 
For any $\mu > 0$, the value function $V_\mu \colon \RR^n \to \RR$, is such that,
\begin{align}
	\label{eq:defV}
	V_\mu(x) &= \min_{y \in D} h(x, y) + \frac{\mu}{2}\|x - y\|_2^2, \text{ for any } x\in \RR^n.
\end{align}
The value function has the subsequent properties.
\begin{lemma}
	\label{lem:valFun}
	\hfill
	\begin{enumerate}
		\item {\ed For any $x\in \RR^n$, $V_\mu(x) = h(x, p_\mu(x)) + \frac{\mu}{2}\|p_\mu(x) - x\|^2$.}
		\item For any $\mu > 0$, $p_\mu$ and $V_\mu$ are definable and continuous on $\RR^n$.
		\item For any $\mu > 0$, the fixed points of $p_\mu$ are exactly the critical points of Problem (\ref{eq:mainProblem}).
		\item For any $\mu > 0$, $V_\mu(x) \leq g(F(x)) - \frac{\mu}{2}\|p_\mu(x) - x\|^2$ for all $x \in D$. %{\jb attention quantifs aussi...}
		\item For any {\ed bounded nonempty set $C$}, there is a constant $K(C) \geq 0$ such that for all $x \in C$ and any $\mu > 0$,
			\begin{align*}
				\dist (0, \partial V_\mu(x)) \leq (K(C) + \mu) \|x - p_\mu(x)\|
			\end{align*}
	\end{enumerate}
\end{lemma}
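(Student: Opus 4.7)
My plan is to dispatch items (1)--(4) by direct application of earlier results and treat (5) via a generalized envelope computation.

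Item (1) is a restatement of the definition of $V_\mu$ using the unique minimizer $p_\mu(x)$, which exists by $\mu$-strong convexity of the inner problem (Lemma \ref{lem:approx}(5) plus the quadratic term). For (2), definability of $V_\mu$ follows from Remark \ref{rem:representableClose}: the map $(x,y)\mapsto h(x,y)+\tfrac{\mu}{2}\|x-y\|^2+i_D(y)$ is definable as a sum and composition of definable objects, and definability is preserved under partial minimization. The graph of $p_\mu$ is then the definable set where this objective coincides with $V_\mu$, so $p_\mu$ is definable as well. Continuity of both maps follows from Berge's maximum theorem, strong convexity providing uniqueness and stability of the minimizer in the parameter $x$.

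For (3), a fixed point $\bar x = p_\mu(\bar x)$ makes the optimality condition (\ref{eq:optimConditions}) reduce to $-\nabla F(\bar x)^T v \in N_D(\bar x)$ for some $v \in \partial g(F(\bar x))$; by Lemma \ref{lem:subdiff} this is exactly $0 \in \partial(g\circ F)(\bar x)+N_D(\bar x)$, the criticality condition for Problem (\ref{eq:mainProblem}). The converse is immediate, since a critical point trivially satisfies the strongly convex optimality condition at $y=\bar x$. For (4), $y\mapsto h(x,y)+\tfrac{\mu}{2}\|x-y\|^2$ is $\mu$-strongly convex, so the strong-convexity inequality at the minimizer, together with feasibility of $y=x$ when $x\in D$, gives $g(F(x)) = h(x,x) \geq V_\mu(x) + \tfrac{\mu}{2}\|x - p_\mu(x)\|^2$.

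Item (5) is where the main work lies. The plan is to establish an envelope-style inclusion
\begin{align*}
\partial V_\mu(x) \subseteq \tfrac{\partial h}{\partial x}(x,p_\mu(x)) + \mu(x - p_\mu(x))
\end{align*}
using subdifferential regularity of $h$ (Lemma \ref{lem:approx}(1)) together with uniqueness and continuity of $p_\mu(\cdot)$. By Lemma \ref{lem:approx}(4), every element of $\tfrac{\partial h}{\partial x}(x,p_\mu(x))$ has the explicit form $\bigl(\sum_i v_i \nabla^2 f_i(x)\bigr)(p_\mu(x)-x)$ with $v\in\partial g$ evaluated at $F(x)+\nabla F(x)(p_\mu(x)-x)$. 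For $x$ in the bounded set $C$, a compactness argument (comparing $p_\mu(x)$ to a fixed feasible point and exploiting convexity of $h(x,\cdot)$ together with continuity of $F$, $\nabla F$, $g$) shows that both $p_\mu(x)$ and the arguments at which $\partial g$ is evaluated remain in a bounded set; local Lipschitz continuity of $g$ then bounds $\|v\|$ and continuity of the $\nabla^2 f_i$ bounds the Hessian norms, yielding a $\mu$-independent constant $K(C)$ with $\bigl\|\tfrac{\partial h}{\partial x}(x,p_\mu(x))\bigr\| \leq K(C)\,\|p_\mu(x)-x\|$. Adding the $\mu(x-p_\mu(x))$ term and applying the triangle inequality gives the stated bound. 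The delicate points are justifying the envelope inclusion rigorously in the limiting-subdifferential setting and controlling boundedness of $p_\mu(x)$ uniformly in $\mu$.
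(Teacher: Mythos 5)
Items (1)--(4) of your proposal are correct and essentially coincide with the paper's own proof: (1) is definitional, (2) combines Remark \ref{rem:representableClose} with uniqueness and stability of the strongly convex minimizer (the paper writes out by hand the accumulation-point argument that Berge's theorem packages), (3) reads the optimality condition \eqref{eq:optimConditions} at a fixed point and invokes Lemma \ref{lem:subdiff}, and (4) is the strong-convexity inequality at the feasible point $y=x$.

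Item (5) is where you genuinely diverge, and it is also where your argument is not yet closed. You propose the marginal-function route, $\partial V_\mu(x)\subseteq \frac{\partial h}{\partial x}(x,p_\mu(x))+\mu(x-p_\mu(x))$, but this inclusion is precisely the step you leave as a placeholder, and ``subdifferential regularity of $h$ together with uniqueness and continuity of $p_\mu$'' does not by itself deliver it for the \emph{limiting} subdifferential. To close it you need the parametric minimization theorem \cite[Theorem 10.13]{rockafellar1998variational} --- whose level-boundedness hypothesis holds locally uniformly in $x$ by $\mu$-strong convexity --- to obtain $\partial V_\mu(x)\subseteq\{w:\,(w,0)\in\partial e_\mu(x,p_\mu(x))\}$, and then regularity of $h$ with \cite[Proposition 10.5 and Corollary 10.11]{rockafellar1998variational} to split the joint subdifferential of $e_\mu$ into partial ones. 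With those two references supplied your route works and is arguably the more standard one. The paper instead avoids this calculus entirely by exploiting definability: by Lemma \ref{lem:diffAlmostEvery}, $V_\mu$ is differentiable on a dense set $S_\mu$; at $\bar x\in S_\mu$ one verifies directly that $(\nabla V_\mu(\bar x),0)$ is a Fr\'echet subgradient of $e_\mu$ at $(\bar x,p_\mu(\bar x))$, reads off the explicit formula $\nabla V_\mu(\bar x)=\bigl(\sum_i v_i\nabla^2 f_i(\bar x)\bigr)(p_\mu(\bar x)-\bar x)+\mu(\bar x-p_\mu(\bar x))$, bounds it locally, and then covers all of $C$ by density of $S_\mu$, continuity of $p_\mu$ and the limit definition of $\partial V_\mu$; what this buys is that one never has to verify the hypotheses of the marginal-function theorem, only compute a classical gradient on a dense set. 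Finally, your second flagged worry is legitimate: $K(C)$ must bound $\|\sum_i v_i\nabla^2 f_i(x)\|$ with $v\in\partial g$ evaluated at a point depending on $p_\mu(x)$, and $p_\mu(x)$ is bounded uniformly over $x\in C$ only when $\mu$ is bounded away from $0$ (it can escape to infinity as $\mu\downarrow 0$, e.g.\ for affine data on $D=\RR^n$). The paper's proof glosses over this as well; in the end the lemma is only invoked for $\mu$ in the finite set $\Theta\subseteq[\mu_0,\mu_+]$, where the uniform bound is unproblematic.
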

\begin{proof} We mostly follow \cite[Section 4.2]{bolte2016Majorization}.
	\begin{enumerate}\itemsep-.2em
		\item {\ed This is a consequence of the definition of $p_\mu$ in \eqref{eq:defp} and the definition of $V_\mu$ in \eqref{eq:defV}.}
		\item Continuity of $p_\mu$ holds because of uniqueness of the minimizer in (\ref{eq:defp}) and continuity of $h$. For any $x, z \in \RR^n$, we have
			\begin{align*}
				h(x, p_\mu(x)) +\frac{\mu}{2}\|p_\mu(x) - x\|^2 \leq h(x, p_\mu(z)) +\frac{\mu}{2}\|p_\mu(z) - x\|^2.
			\end{align*}
			From strong convexity and continuity of $h$, $F$ and $\nabla F$, $p_\mu$ must be bounded on bounded sets. Let $x$ converge to $z$ and take $\bar{p}$ any accumulation point of $p_\mu(x)$. By continuity of $h$, we have
			\begin{align*}
				h(z, \bar{p}) +\frac{\mu}{2}\|\bar{p} - z\|^2 \leq h(z, p_\mu(z)) +\frac{\mu}{2}\|p_\mu(z) - z\|^2.
			\end{align*}
			By strong convexity, we must have $\bar{p} = p_\mu(z)$, hence $p_\mu(x) \to p_\mu(z)$. Continuity of $V_\mu$ follows and definability is a consequence of Remark \ref{rem:representableClose}.	
		\item From (\ref{eq:optimConditions}), if $x$ is a fixed point of $p_\mu$, we have $- \nabla F(x)^T v \in N_D(x)$ where $v \in \partial g(F(x))$. Using Lemma \ref{lem:subdiff}, we see that this is exactly the optimality condition for Problem (\ref{eq:mainProblem}).
		\item From Lemma \ref{lem:approx}, and strong convexity of Problem (\ref{eq:defp}), we have {\ed for any $x \in D$},
			\begin{align*}
				V_\mu(x) \leq h(x, x) - \frac{\mu}{2} \|p_\mu(x) - x\|^2 = g(F(x)) - \frac{\mu}{2} \|p_\mu(x) - x\|^2
			\end{align*}
		\item We introduce a parametrized function, for any $\mu> 0$, $e_\mu\colon \RR^n \times \RR^n \to \bar{\RR}$, for any $x,y \in \RR^n$,
			\begin{align*}
				e_\mu(x,y)= h(x, y) + \frac{\mu}{2}\|x - y\|^2+ i_D(y)
			\end{align*}
			Since $V_\mu \colon \RR^n \to \RR$ is definable, using Lemma \ref{lem:diffAlmostEvery}, it is differentiable almost everywhere. Let $S_\mu$ be the set where $V_\mu$ is differentiable (dense in $\RR^n$). Fix a point $\bar{x} \in S_\mu$. We have, for any $\mu,\delta \in \RR^n$, 
			\begin{align*}
				&e(\bar{x} + \delta, p_\mu(\bar{x}) + \mu)\\
				 \geq\;&h(\bar{x} + \delta, p_\mu(\bar{x} + \delta)) + \frac{\mu}{2}\|\bar{x} + \delta - p_\mu(\bar{x} + \delta)\|^2\\
				=\;&V_\mu(\bar{x} + \delta)=V_\mu(\bar{x}) + \left\langle\nabla V_\mu(\bar{x}), \delta \right\rangle + o(\|\delta\|)\\
				=\;&e( \bar{x} , p_\mu(\bar{x}) ) + \left\langle\nabla V_\mu(\bar{x}), \delta \right\rangle + o(\|\delta\|).
			\end{align*}
			This shows that $(\nabla V_\mu(\bar{x}), 0) \in \hat{\partial} e(\bar{x}, p_\mu(\bar{x}))$ where $\hat{\partial}$ denotes the Fr\'echet sudifferential \cite[Definition 8.3]{rockafellar1998variational}. Hence, from Lemma \ref{lem:approx} and \cite[Corollary 10.11]{rockafellar1998variational}, we have
			\begin{align*}
				\nabla V_\mu(\bar{x}) = \left( \sum_{i=1}^m v_i \nabla^2 f_i(\bar{x}) \right)  ( p_\mu(\bar{x}) - \bar{x}) + \mu(\bar{x} - p_\mu(\bar{x})) 
			\end{align*}
			where $v = (v_1, v_2, \ldots, v_m)^T \in \partial g(F(\bar{x}) + \nabla F(\bar{x})(p_\mu(\bar{x}) - \bar{x}))$. By local Lipschitz continuity of $g$, twice continuous differentiability of $F$ and continuity of $p_\mu$, all the quantities that appear in this formula are locally bounded. Hence, for any neighborhood $V$ of $\bar{x}$ there must exist a constant $K$ such that $\|\nabla V_\mu(x)\| \leq (K + \mu)\|x - p_\mu(x)\|$ for all $x \in V \cap S_\mu$. The result is proved by combining continuity of $p_\mu$, definition of the limiting subdifferential \cite[Definition 8.3]{rockafellar1998variational} and the fact that $S_\mu$ is dense in $\RR^n$.
	\end{enumerate}
	\vskip -.5cm
\end{proof}
\section{Proof of Theorem \ref{th:mainTheorem}}
\label{sec:proof}
We extend the proof of \cite[Proposition 4.12]{bolte2016Majorization} to handle the fact that $\mu_k$ is not constant. We actually show that if $\|x_k\| \not\to +\infty$, $\mu_k$ does not diverge. %\st{For this we will use the continuity and the definability of $V_{\mu_k}$ (with $k\geq0$) together with Lemma} \ref{lem:valFun} \st{and Lemma} \ref{lem:wellDefinedZero}\st{,[PAS CLAIR ICI tu melanges ce que tu vas faire/utiliser et ce que tu fais (cf inegalites ci-dessous)]} 
{\ed An important ingredient of the proof is the subsequent inequality which can be obtained by combining Lemma \ref{lem:valFun} and Lemma \ref{lem:wellDefinedZero}.}
\begin{align}
	\label{eq:equation(a)}
	V_{\mu_k}(x_{k}) +  \frac{\mu_k}{2}\|x_{k+1} - x_k\|^2 \leq h(x_k, x_k) = g(F(x_k)) \leq V_{\mu_{k-1}}(x_{k-1}).
\end{align}
{\ed We will also rely on properties of $V_{\mu_k}$ and $p_{\mu_k}$ given in Lemma \ref{lem:valFun} (for a fixed $k \in \NN$) and use them in the spirit of \cite{attouch2013convergence,bolte2013proximal}. Finally, we handle the dynamical behaviour of $\mu_k$, $k\geq 0$, defined in Steps 1 and 2 of the algorithm, thanks to Lemma \ref{lem:wellDefinedZero}.}%\st{We will also use the specific construction s(?) of Step 1 and Step 2 that drives describes/explains ? the behaviour of $\mu_k$.} 
 Throughout the proof, we assume that $\|x_k\| \not \to + \infty$ that is $\{x_k\}$ has at least one accumulation point.
\paragraph{Case 1: $x_k$ is stationary.}
Suppose that there exists $k_0\geq 0$ such that $x_{k_0+1} = x_{k_0}$. We have a fixed point of $p_{\mu_{k_0}}$, hence of $p_\mu$ for any $\mu > 0$. This implies that $x_{k_0+l} = x_{k_0}$ for all $l \geq 0$. Thus $x_k$ is stationary, hence converges and the increments are summable. Furthermore, from Lemma \ref{lem:wellDefined}, $\mu_k$ must be bounded. Finally,  according to Lemma \ref{lem:valFun}, we have a critical point of Problem (\ref{eq:mainProblem}).
\paragraph{Case 2: $x_k$ is not stationary.}
We now suppose that $||x_{k+1} - x_k|| > 0$ for all~$k\geq 0$. From \eqref{eq:equation(a)}, we have that both $V_{\mu_k}(x_k)$, and $g(F(x_k))$ are decreasing sequences. Let $\bar{x}$ be an accumulation point of $x_k$. The sequence of values $g(F(x_k))$ cannot  go to $-\infty$ and hence converges to $g(F(\bar{x}))$ by continuity. With no loss of generality, we assume that $g(F(\bar{x}))=0$. From \eqref{eq:equation(a)} again, this implies that $\mu_k\|x_{k+1} - x_k\|^2$ is summable and hence goes to $0$ and that $V_{\mu_k}(x_k)$ also converges from above to $g(F(\bar{x}))$.\\

\noindent{\em Definition of a KL neighborhood.} Fix $\delta_1 > 0$. By Lemma \ref{lem:wellDefined}, there must exist a constant $\bar{\mu} > 0$ such that for any $\mu \geq \bar{\mu}$ and any $x$, with $\|x - \bar{x}\| \leq \delta_1$, it must hold that $g(F(p_\mu(x))) \leq V_\mu(x)$. In other words, for any  $k \in \NN$, $\|x_k - \bar{x}\| \leq \delta_1$ implies that $\mu_0  \leq \mu_k \leq \mu_+ := \max\left\{ \mu_0, \tau \bar{\mu} \right\}$. We define the set ${\ed \Theta} = \{\mu_0 \tau^i;\;i\in \NN\} \cap \{t \in \RR;\; \mu_0 \leq t \leq \mu_+\}$
%{\jb -- mauvaise notation ${\cal B}_s$ par exemple est mieux. Aussi qui est s? Stationnaire la notation ressemble trop a un indice c est pas bon je prefere ${\cal B}$ ou ? --} 
which is a nonempty finite set and satisfies for all $k \in \NN$
\begin{align}
	\label{eq:neighborBeta}
	\|x_k - \bar{x}\| \leq \delta_1 \Rightarrow \mu_k \in \Theta.
\end{align}
For a fixed $\mu \in \Theta$, combining Lemma \ref{lem:valFun} and Theorem \ref{th:KL}$, {V_{\mu}}$ is a KL function. There exists $\delta_{\mu} > 0$, $\alpha_\mu > 0$ and $\varphi_\mu$ which is positive, concave and continuous on $[0, \alpha_\mu]$ and $\mathcal{C}^1$ on $(0, \alpha_\mu)$ with $\varphi_\mu'>0$ and $\varphi_\mu(0) = 0$, such that
	$$\varphi_\mu'(V_\mu(x))\, \dist(0, \partial V_\mu(x)) \geq 1,$$
	for all $x$ such that $\|x-\bar x\|\leq \delta_\mu$ and $x \in [0 < V_\mu < \alpha_\mu]$. Let us consider the following quantities {\ed (recall that $\Theta$ is finite)},
\begin{align}
	\label{eq:defPhi}
	\delta = \min\left\{ \delta_1, \min_{\mu \in \Theta}\left\{ \delta_\mu \right\} \right\} > 0, \quad \alpha = \min_{\mu \in \Theta}\left\{ \alpha_\mu \right\} > 0,\quad\varphi = \sum_{\mu \in \Theta} \varphi_\mu.
\end{align}
{\ed We deduce from properties of each $\varphi_\mu$ for $\mu \in \Theta$ that} $\varphi$ is positive, concave and continuous on $[0, \alpha]$ and $\mathcal{C}^1$ on $(0, \alpha)$ with $\varphi'>0$ and $\varphi(0) = 0$. For any $\mu \in \Theta$, we have
\begin{align}
	\label{eq:generalKL}
	\varphi'(V_\mu(x))\, \dist(0, \partial V_\mu(x)) \geq \varphi_\mu'(V_\mu(x))\, \dist(0, \partial V_\mu(x)) \geq 1, 
\end{align}
for all $x$ such that $\|x-\bar x\|\leq \delta$ and $x \in [0 < V_\mu < \alpha]$. In view of Lemma \ref{lem:valFun}, set $K_2=K\left(\bar B(\bar x,\delta)\right)$, so that for any $x \in B(\bar x, \delta)$ and any $\mu \in \Theta$,
\begin{align}
	\label{eq:stepLength}
	\dist(0, \partial V_\mu(x)) \leq (K_2 + \mu) \|x - p_\mu(x)\|.
\end{align}

	\noindent
	{\em Estimates within the neighborhood.} Let $r\geq s>1$ be some integers and assume that the points $x_{s-1},x_{s}\ldots, x_{r-1}$ belong to $B(\bar x,\delta)$ with $V_{\mu_{s-1}}(x_{s-1}) < \alpha$. Fix $k \in \{s,\ldots,r \}$, we have
\begin{align*}
	&V_{\mu_k}(x_{k}) \\
	\overset{\eqref{eq:equation(a)}}{\leq}\;&V_{\mu_{k-1}}(x_{k-1}) - \frac{\mu_k}{2}||x_{k+1} - x_k||^2 \\
	=\;&V_{\mu_{k-1}}(x_{k-1}) - \frac{\mu_k}{2} \frac{||x_{k+1} - x_k||^2}{||x_{k} - x_{k-1}||} ||p_{\mu_{k-1}}(x_{k-1}) - x_{k-1}|| \\
	\overset{\eqref{eq:stepLength}}{\leq}\;& V_{\mu_{k-1}}(x_{k-1}) - \frac{\mu_k}{2(K_2 + \mu_k)} \frac{||x_{k+1} - x_k||^2}{||x_{k} - x_{k-1}||} \dist(0, \partial V_{\mu_{k-1}}(x_{k-1}))\\
	\overset{\eqref{eq:neighborBeta}}{\leq}\;&V_{\mu_{k-1}}(x_{k-1}) - \frac{\mu_0}{2(K_2 + \mu_0)} \frac{||x_{k+1} - x_k||^2}{||x_{k} - x_{k-1}||} \dist(0, \partial V_{\mu_{k-1}}(x_{k-1})).
\end{align*}
We use $\varphi$ as defined in {\ed \eqref{eq:defPhi}}. This is possible because $V_{\mu_k}(x_k)$ is decreasing, and $V_{\mu_{s-1}}(x_{s-1}) < \alpha$. Let $K = \frac{\mu_0}{2(K_2 + \mu_0)} > 0$, using the monotonicity, the differentiability and the concavity of $\varphi$ we derive
\begin{align}
	&\nonumber\varphi(V_{\mu_k}(x_k))\\
	\leq\;& \varphi(V_{\mu_{k-1}}(x_{k-1})) \\
	&- \varphi'(V_{\mu_{k-1}}(x_{k-1})) \dist(0, \partial V_{\mu_{k-1}}(x_{k-1}))K \frac{||x_{k+1} - x_k||^2}{||x_{k} - x_{k-1}||} \\
	\label{eq:ineqSum}\overset{\eqref{eq:neighborBeta},\eqref{eq:generalKL}}{\leq}\;&\varphi(V_{\mu_{k-1}}(x_{k-1})) - K \frac{||x_{k+1} - x_k||^2}{||x_{k} - x_{k-1}||}.
\end{align}
It is easy to check that for $a>0$  and $b \in \RR$
\begin{equation}
2(a - b) \geq \frac{a^2- b^2}{a}. \label{eq:ineqab}
\end{equation}
We have therefore, for $k$ in $\{s,\ldots,r \}$,
\begin{align*}
	&||x_{k} - x_{k-1}|| \\
	=\;&\frac{||x_{k} - x_{k-1}||^2}{||x_{k} - x_{k-1}||}\\
	=\;& \frac{||x_{k+1} - x_k||^2}{||x_{k} - x_{k-1}||} + \frac{ ||x_{k} - x_{k-1}||^2 - ||x_{k+1} - x_k||^2}{||x_{k} - x_{k-1}||}\\
	\overset{(\ref{eq:ineqab})}{\leq}\;&\frac{||x_{k+1} - x_k||^2}{||x_{k} - x_{k-1}||} + 2(||x_{k} - x_{k-1}|| - ||x_{k+1} - x_k||)\\
	\overset{(\ref{eq:ineqSum})}{\leq}\;&K^{-1}\left(\varphi\left( V_{\mu_{k-1}}(x_{k-1})\right) - \varphi\left(V_{\mu_k}(x_k)\right)\right)\\
	&+  2(||x_{k} - x_{k-1}|| - ||x_{k+1} - x_k||).
\end{align*}
%The sum of terms on the right hand side is bounded above, therefore the sequence $||x_{k+1} - x_k||$ is summable and the iterate sequence is Cauchy and hence converges.
Hence by summation
\begin{align}\label{cauchy}
	\sum_{k=s}^{r} ||x_{k} - x_{k-1}|| \leq\;&K^{-1}\left(\varphi\left( V_{\mu_{s-1}}(x_{s-1})\right) - \varphi\left(V_{\mu_r}(x_r)\right)\right) \\
	&+  2(||x_{s} - x_{s-1}|| - ||x_{r+1} - x_r||).
\end{align}
{\em The sequence remains in the neighborhood and converges.} Take $N$ sufficiently large so that 
\begin{align}
\|x_N-\bar x\|&\leq \frac{\delta}{4},\label{petit0}\\
K^{-1}\varphi\left( V_{\mu_N} (x_{N})\right) &\leq \frac{\delta}{4},\label{petit}\\
{V_{\mu_{N}}}(x_{N})& < \alpha\label{petit1}\\
\label{up} \|x_{N+1}-x_N \| &  < \frac{\delta}{4}.
\end{align}
One can require \eqref{petit0} together with \eqref{petit}, \eqref{petit1} because $\varphi$ is continuous and $V_{\mu_k}(x_k)\downarrow 0$ and (\ref{up}) because $\|x_{k+1}-x_k \| \to 0$. Let us prove that $x_r \in B(\bar{x}, \delta)$ for $r \geq N + 1$. We proceed by induction on $r$. By \eqref{petit0} and \eqref{up}, $x_{N+1} \in B(\bar{x}, \delta)$  thus the induction assumption is valid for $r=N+1$.  Using \eqref{petit1},  estimation \eqref{cauchy} can be applied with $s= N+1$. Suppose that $r \geq N+1$ and $x_{N}, \ldots, x_{r-1} \in B(\bar x,\delta)$, then we have 
\begin{eqnarray*}
&&\|x_{r}-\bar x\| \\
& \leq & \|x_{r}-x_{N}\| + \|x_N-\bar x\| \\
&\overset{\eqref{petit0}}\leq&\sum_{k=N+1}^{r}\|x_{k}-x_{k-1}\|+\frac{\delta}{4}\\
& \overset{\eqref{cauchy}}\leq& K^{-1}\varphi\left(V_{\mu_N}(x_{N})\right) + 2 ||x_{N+1} - x_{N}||+ \frac{\delta}{4}\\
&\overset{\eqref{petit}, \eqref{up}}<&  \delta.
\end{eqnarray*}
Hence $x_{N}, \ldots, x_{r} \in B(\bar x,\delta)$ and the induction proof is complete. Therefore, $x_r \in B(\bar{x}, \delta)$ for any $r \geq N$ and $\mu_r$ takes value in the finite set $\Theta$ and remains bounded for all $r \geq N$. Using \eqref{cauchy} again, we obtain that the series $\sum \|x_{k+1}-x_k\|$ converges, hence $x_k$ also converges by Cauchy's criterion. Let $x_{\infty}$ be its limit, taking $\mu_{\infty}$ any limiting value of $\mu_r$, it must hold that $x_{\infty}$ is a fixed point of $p_{\mu_{\infty}}$ and by Lemma \ref{lem:valFun} a critical point of Problem (\ref{eq:mainProblem}) and the proof is complete.

\section*{Acknowledgments}
Effort sponsored by the Air Force Office of Scientific Research, Air Force Material Command, USAF, under grant number FA9550-15-1-0500. The author would like to thank J\'er\^ome Bolte for his introduction to the topic, constant support and comments on an early draft of this work, Alfred Auslender and Marc Teboulle for their suggestion to consider the composite model.

\end{document}